\renewenvironment{abstract}
{\small\vspace{-1em}
\begin{center}
\bfseries\abstractname\vspace{-.5em}\vspace{0pt}
\end{center}
\list{}{
\setlength{\leftmargin}{0.6in}%
\setlength{\rightmargin}{\leftmargin}}%
\item\relax}
{\endlist}
\declaretheorem[name=Theorem, numberwithin=section]{theorem}
\declaretheorem[name=Lemma, sibling=theorem]{lemma}
\declaretheorem[name=Observation, sibling=theorem]{observation}
\def\cqedsymbol{\ifmmode$\lrcorner$\else{\unskip\nobreak\hfil
\penalty50\hskip1em\null\nobreak\hfil$\lrcorner$
\parfillskip=0pt\finalhyphendemerits=0\endgraf}\fi}
\DeclareMathOperator{\tw}{\mathsf{tw}}
\DeclareMathOperator{\polylog}{\mathrm{polylog}}
\newcommand{\N}{\mathbb{N}}
\let\le\leqslant
\let\ge\geqslant
\let\leq\leqslant
\let\geq\geqslant
\title{Treewidth versus clique number: induced minors}
\author[1,2]{Claire Hilaire}
\author[1,2]{Martin Milanič}
\author[3]{Nicolas Trotignon}
\author[1]{Djordje Vasić}
\affil[1]{FAMNIT, University of Primorska, Koper, Slovenia}
\affil[2]{IAM, University of Primorska, Koper, Slovenia}
\affil[3]{CNRS, ENS de Lyon, Université Claude Bernard Lyon 1, LIP~UMR~5668, 69342~Lyon~Cedex~07, France}
\date{}
\begin{document}

\maketitle

\bibliographystyle{abbrv}

\begin{abstract}
    We prove that a hereditary class of graphs is $(\tw, \omega)$-bounded if and only if the induced minors of the graphs from the class form a $(\tw, \omega)$-bounded class. 
\end{abstract}

\section{Introduction}

It is conjectured in \cite{DBLP:journals/jctb/DallardMS24} that if a hereditary class of graphs is $(\tw, \omega)$-bounded, then it has  bounded tree-independence number (all definitions are given in \cref{sec:def}).  This was disproved in~\cite{chuTro24} by an explicit counterexample. Proving weaker statements is therefore of interest. Here we investigate a weakening obtained by strengthening the assumption. Instead of assuming that the class is $(\tw, \omega)$-bounded, we make the formally stronger assumption that the induced minors of the graphs in the class form a $(\tw, \omega)$-bounded class.  

The goal of this note is to observe that this seemingly stronger assumption is in fact not stronger, because of our main result: a hereditary class of graphs is  $(\tw, \omega)$-bounded if and only if the induced minors of the graphs in the class form a  $(\tw, \omega)$-bounded class of graphs. 
This result is proved in \cref{sec:main}. 
Some additional remarks are given in \cref{sec:open}.

\section{Preliminaries}
\label{sec:def}

We denote by $\N$ the set of nonnegative integers, $\N = \{0,1,2,\ldots\}$.
All the graphs considered in this paper are finite, simple (that is, without loops or parallel edges), and undirected.
The \emph{subdivision of an edge} $uv$ of a graph
is the operation that removes the edge $uv$ and adds two edges $uw$ and $wv$, where $w$ is a new vertex.
A \emph{subdivision of a graph} $H$ is a graph obtained
from $H$ by a sequence of edge subdivisions.
An \emph{independent set} in a graph $G$ is a set of pairwise nonadjacent vertices.
The \emph{independence number} of a graph $G$, denoted $\alpha(G)$, is the size of a largest independent set in $G$.
A \emph{clique} in a graph $G$ is a set of pairwise adjacent vertices.
The \emph{clique number} of a graph $G$, denoted $\omega(G)$, is the size of a largest clique in $G$.
Given a set $S\subseteq V(G)$, we denote by $G[S]$ the subgraph of $G$ induced by $S$.
For $v\in V(G)$, the set $N_G(v) = \{ u\in V(G) \colon vu\in E(G)\}$ is the \emph{neighborhood} of $v$,  and $N_G[v] = N(v) \cup \{v\}$ is the \emph{closed neighborhood} of $v$.
Similarly, given a subset of vertices $S \subseteq V$, $N_G[S]$\ is the set $\cup_{v \in S} N_G[v] $ and $N_G(S)$ is the set $N_G[S] \setminus S$. 
The \emph{degree} of a vertex $v\in V(G)$, denoted by $d_G(v)$, is the number of edges incident with $v$.
A graph $G$ is called \emph{subcubic} if every vertex of $G$ has degree at most $3$.
A \emph{complete bipartite graph} is a graph whose vertices are partitioned into two sets, called \emph{parts}, such that two vertices are adjacent if and only if they belong to different parts.
Given two integers $m,n\ge 0$, a complete bipartite graph whose parts have sizes $m$ and $n$ is denoted by $K_{m,n}$.
In particular, $K_{1,3}$ is the \emph{claw}.
A graph class is \emph{hereditary} if for any graph $G$ in the class, every induced subgraph of $G$ also belongs to the class.

A graph $H$ is a \emph{minor} of a graph $G$ if a graph isomorphic to $H$ can be obtained from $G$ by a sequence of vertex deletions, edge deletions, and edge contractions.
A graph $H$ is an \emph{induced minor} of a graph $G$ if a graph isomorphic to $H$ can be obtained from $G$ by a sequence of vertex deletions and edge contractions.
A \emph{minor model} of a graph $H$ in a graph $G$ is a collection $\{ X_v \}_{v\in V(H)}$ of pairwise disjoint vertex sets $X_v\subseteq V(G)$ called \emph{branch sets} such that each induced subgraph $G[X_v]$ is connected, and if there is an edge $uv\in E(H)$, then there is an edge between $X_u$ and $X_v$ in $G$.
If deleting any vertex from some branch set of $\{ X_v \}_{v\in V(H)}$ results in a collection that is not a minor model of $H$ in $G$, then the minor model $\{ X_v \}_{v\in V(H)}$ is called a \emph{minimal minor model} of a graph $H$ in a graph $G$.
A graph $G$ contains $H$ as a minor if and only if there is a minor model of $H$ in $G$.
An \emph{induced minor model} of $H$ is a minor model with an additional constraint that if $u,v\in V(H)$, $u\neq v$, and $\{ u,v \}\notin E(H)$, then there are no edges between $X_u$ and $X_v$.
Note that a graph $G$ contains $H$ as an induced minor if and only if there is an induced minor model of $H$ in $G$.

A \emph{tree decomposition} of a graph $G$ is a pair $\mathcal{T}=(T, \{ X_t \}_{t\in V(T)})$ where $T$ is a tree and every node $t$ of $T$ is assigned a vertex subset $X_t\subseteq V(G)$ called a \emph{bag} such that the following conditions are satisfied: every vertex $v$ is in at least one bag, for every edge $uv\in E(G)$ there exists a node $t\in V(T)$ such that $X_t$ contains both $u$ and $v$, and for every vertex $u\in V(G)$ the subgraph $T_u$ of $T$ is induced by the set $\{ t\in V(T) \colon u\in X_t \}$ is connected, (that is, a tree).
The \emph{width} of $\mathcal{T}$, denoted by $width(\mathcal{T})$, is the maximum value of $|X_t|-1$ over all $t\in V(T)$.
The \emph{treewidth} of a graph $G$, denoted by $\tw(G)$, is defined as the minimum width of a tree decomposition of $G$.
It is well known that if $H$ is a minor of $G$, then $\tw(H)\le \tw(G)$ (see, e.g.,~\cite{MR1647486}).
Furthermore, subdividing an edge does not change the treewidth of a graph.
A graph class $\mathcal{G}$ is said to be \emph{$(\tw, \omega)$-bounded} if it admits a \emph{$(\tw, \omega)$-binding function}, that is, a function $f:\N \rightarrow \N$ such that for every graph $G\in \mathcal{G}$ and every induced subgraph $G'$ of $G$, we have $\tw(G')\le f(\omega (G'))$.
A graph class that admits a polynomial $(\tw, \omega)$-binding function is said to be \emph{polynomially $(\tw, \omega)$-bounded}.
It is easy to see that every $(\tw, \omega)$-bounded graph class admits a $(\tw, \omega)$-binding function $f$ that is nondecreasing, that is, $k\le \ell$ implies $f(k)\le f(\ell)$ for all $k,\ell\in \N$. 
The same is true for polynomially $(\tw, \omega)$-bounded graph classes.

\begin{observation}\label{poly-nondec}
Every polynomially $(\tw, \omega)$-bounded graph class admits a polynomial $(\tw, \omega)$-binding function that is nondecreasing.
\end{observation}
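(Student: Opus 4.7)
The plan is to take any polynomial $(\tw, \omega)$-binding function for the class and massage it into one that is nondecreasing without losing the polynomial upper bound. The key observation is that a polynomial with nonnegative coefficients is automatically nondecreasing on $\N$, so it suffices to dominate the original function by such a polynomial.

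Concretely, suppose the class $\mathcal{G}$ is polynomially $(\tw, \omega)$-bounded, witnessed by some binding function $f : \N \to \N$ and a polynomial $p(x) = \sum_{i=0}^{d} a_{i} x^{i}$ with $f(k) \le p(k)$ for all $k \in \N$. I would then define
\[
q(x) = \sum_{i=0}^{d} |a_{i}|\, x^{i} \qquad \text{and} \qquad g(k) = \lceil q(k) \rceil.
\]
Since every coefficient of $q$ is nonnegative, $q$ is nondecreasing on $[0, \infty)$, hence $g : \N \to \N$ is also nondecreasing. Moreover, for every $k \in \N$ one has $q(k) \ge p(k) \ge f(k)$, because $k^{i} \ge 0$ and $|a_{i}| \ge a_{i}$ for each $i$. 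Thus $g(k) \ge f(k) \ge \tw(G')$ whenever $G'$ is an induced subgraph of some $G \in \mathcal{G}$ with $\omega(G') = k$, so $g$ is a $(\tw, \omega)$-binding function. Finally, $g$ is bounded above by the polynomial $q(x) + 1$, so $g$ is itself polynomial in the sense of the definition.

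There is no real obstacle here: the argument is entirely formal and uses nothing about the graphs in $\mathcal{G}$. The only mild subtlety is making sure the codomain is $\N$ rather than $\mathbb{R}_{\ge 0}$, which is handled by the ceiling, and ensuring that passing to absolute values of coefficients does not destroy the polynomial bound, which is immediate since $k \ge 0$ implies $k^{i} \ge 0$.
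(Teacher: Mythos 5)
Your proposal is correct and follows essentially the same route as the paper: replace the given polynomial by one with nonnegative coefficients (the paper drops the negative-coefficient terms rather than taking absolute values, but this is immaterial), observe that such a polynomial is nondecreasing on $\N$ and dominates the original there, hence remains a binding function. The extra ceiling step is a harmless cosmetic addition.
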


\begin{proof}
Let $\mathcal{G}$ be a graph class admitting a polynomial $(\tw,\omega)$-binding function $p: \N \rightarrow \N$ given by a polynomial expression $p(x)=\sum_{i=0}^d a_i x^i$ where $a_i\in \mathbb{Z}$ for all $i\in \{0,1,\ldots, d\}$.
Let $r(x)=\sum_{i=0}^d b_i x^i$ be the polynomial that we obtain from $p$ after deleting all terms with negative coefficients.
We claim that $r$ is a nondecreasing $(\tw,\omega)$-binding function.
Observe that $r$ is a $(\tw,\omega)$-binding function for $\mathcal{G}$,  since for every graph $G\in \mathcal{G}$ and every induced subgraph $G'$ of $G$, it holds that $\tw(G')\le p(\omega(G')) \le r(\omega(G'))$.
Furthermore, for every $x\ge 0$ we have $\diff{r}{x} = \sum_{i=1}^d ib_ix^{i-1} \ge 0$, since $b_i\ge 0$ for all~$i$.
Hence, $r$ is nondecreasing on $\N$.
\end{proof}

\section{The main result}
\label{sec:main}

The following result is known as the Grid Minor Theorem, proved by Robertson and Seymour~\cite{zbMATH03963883}.
Since we do not need the precise definition of a $k\times k$ wall, we omit it; we only recall that the $k\times k$ wall is a subcubic graph with treewidth at least $k$.
Hence, any subdivision of a $k \times k$ wall also has treewidth at least $k$.

\begin{theorem}\label{thm:grid-minor thm}
There exists a function $g:\mathbb{N}\to \mathbb{N}$ such that for every $k\in \mathbb{N}$, a graph $G$ has treewidth at least $g(k)$ if and only if $G$ has a subdivision of a $k \times k$ wall as a subgraph.
\end{theorem}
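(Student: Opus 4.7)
\textbf{Proof plan for \cref{thm:grid-minor thm}.} The plan is to handle the two directions separately and, for the hard direction, to invoke the Grid Minor Theorem of Robertson and Seymour as a black box rather than reprove it.

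For the backward implication, the argument is immediate from the facts recalled just before the statement. A $k\times k$ wall is subcubic and has treewidth at least $k$, edge subdivisions do not change treewidth, and the treewidth of a subgraph is at most the treewidth of the ambient graph. Hence any graph containing a subdivision of a $k\times k$ wall as a subgraph has treewidth at least $k$, so choosing the function $g$ to satisfy $g(k)\le k$ makes this side of the equivalence automatic.

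For the forward implication, the plan is to cite the classical Grid (Wall) Minor Theorem. If a self-contained proof were required, the standard modern route would proceed through the machinery of \emph{tangles} (or brambles): a graph $G$ with $\tw(G)\ge t$ admits a tangle of order $\Omega(t)$, and from a tangle of sufficiently high order one extracts a wall subdivision by repeatedly applying Menger's theorem to produce long vertex-disjoint paths on the ``big side'' singled out by the tangle, then threading transversal paths between consecutive rows to form the wall's brick pattern. Polynomial bounds on $g$ were later obtained by Chekuri and Chuzhoy via well-linked decompositions.

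The principal obstacle in a self-contained proof is organising the horizontal and vertical families of paths so that they meet exactly where the wall prescribes and nowhere else; controlling this simultaneous vertex-disjointness is precisely the role of the tangle (or well-linked set) axioms. Since \cref{thm:grid-minor thm} is used in this paper only as a standard tool supporting the main argument of \cref{sec:main}, we do not reprove it and simply cite the original source.
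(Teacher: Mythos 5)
Your plan matches the paper exactly: the paper offers no proof of this statement, presenting it as the Grid Minor Theorem of Robertson and Seymour (with the polynomial bounds of Chekuri--Chuzhoy and Chuzhoy--Tan noted afterwards), which is precisely the black-box citation you propose, and your remark that the backward implication follows from the wall being subcubic of treewidth at least $k$ mirrors the paper's own preliminary comment. One caveat worth noting: your two directions constrain $g$ incompatibly (you need $g(k)\le k$ for the backward implication, while the cited theorem supplies a much larger $g$ for the forward one), but this looseness is inherited from the ``if and only if'' phrasing of the statement itself, and the paper only ever uses the forward direction.
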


It is shown in~\cite{MR3593966,MR4155282} that $g$ can be chosen to be a polynomial; in particular, $g(k) = \mathcal{O}(k^9\polylog k)$ works.

The following is a basic observation that has been made many times in different contexts with slight variants. 
It is implicit in~\cite{watkinsMesner:cycle}, appears more explicitly in~\cite[Lemma~3.3]{maffray.t:reco} and \cite[Lemma 5.1]{zbMATH07796422}, and with a statement closer to how we formulate it here in~\cite[Lemma~1]{DBLP:conf/iwoca/DallardDHMPT24}.

\begin{lemma}
\label{lemma:classes-STM}
Let $G$ be a graph and $I\subseteq V(G)$ with $|I|\leq 3$.
If there exists a connected component $C$ of $G - I$ such that $I\subseteq N_G(V(C))$, then there exists a connected induced subgraph $H$ of $C$ such that $I\subseteq N_G(V(H))$ and $\omega(H)\leq 3$.
\end{lemma}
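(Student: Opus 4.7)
The plan is to construct $H$ directly using shortest paths in $C$. For each $v \in I$, fix a neighbor $u_v \in V(C)$ of $v$ in $G$; such a vertex exists because $I \subseteq N_G(V(C))$. If $|I| \le 1$, I take $H$ to be a single vertex of $C$ (the vertex $u_v$ if $I = \{v\}$, or any vertex of $C$ otherwise), so $\omega(H) \le 1$. If $I = \{v_1, v_2\}$, I take $H$ to be a shortest $u_{v_1}$-$u_{v_2}$ path in $C$; as a shortest path, it is induced in $G$, so $\omega(H) \le 2$.

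The only substantive case is $I = \{v_1, v_2, v_3\}$. Let $P$ be a shortest $u_{v_1}$-$u_{v_2}$ path in $C$, and let $Q = x_0 x_1 \cdots x_k$ be a shortest path in $C$ from $x_0 = u_{v_3}$ to $V(P)$, so that $x_k \in V(P)$ and $x_i \notin V(P)$ for $i < k$. Set $H = G[V(P) \cup V(Q)]$. By construction, $H$ is a connected induced subgraph of $C$ containing each $u_{v_i}$, hence $I \subseteq N_G(V(H))$, and it remains to show $\omega(H) \le 3$.

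To bound $\omega(H)$, I would use two structural features of the construction: $(i)$ both $P$ and $Q$ are induced paths in $G$, since a chord of a shortest path would yield a shorter one; and $(ii)$ no $x_i$ with $i \le k-2$ has a neighbor in $V(P)$, for otherwise an edge $x_i y$ with $y \in V(P)$ would give a walk $x_0 \cdots x_i y$ of length $i+1 < k$ from $u_{v_3}$ to $V(P)$, contradicting the minimality of $Q$. Given a clique $K$ of $H$, write $K_1 = K \cap V(P)$ and $K_2 = K \cap \{x_0, \ldots, x_{k-1}\}$, so that $K = K_1 \cup K_2$ disjointly. Feature $(i)$ yields $|K_1| \le 2$ and $|K_2| \le 2$, while feature $(ii)$ together with $Q$ being induced forces $K_2 \subseteq \{x_{k-1}\}$ whenever $K_1 \ne \emptyset$. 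A short case distinction then gives $|K| \le 3$.

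The main obstacle is really this last combinatorial check; the construction itself is straightforward. The bound cannot be improved to $\omega(H) \le 2$ in general, because $x_{k-1}$ together with $x_k$ and another neighbor of $x_{k-1}$ on $P$ may form a triangle in $H$, so the value $3$ appearing in the statement is tight.
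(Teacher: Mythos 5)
Your proof is correct. Note that the paper does not actually prove \cref{lemma:classes-STM}; it cites it as a known folklore result (Watkins--Mesner, Maffray--Trotignon, etc.), so there is no in-paper argument to compare against. The standard proofs in those references take a connected induced subgraph $H$ of $C$ with $I\subseteq N_G(V(H))$ that is \emph{minimal} under vertex deletion and analyze its structure (it is an induced path or a subdivided claw, possibly with a triangle at the branch vertex); your explicit shortest-path construction is a constructive variant of the same idea and is entirely sound. The key steps all check out: $P$ and $Q$ are induced because shortest paths in $C$ are chordless in $C$, hence in $G$, since $C$ is an induced subgraph of $G$; the observation that no $x_i$ with $i\le k-2$ has a neighbor on $P$ follows correctly from the minimality of $Q$ as a shortest path from $u_{v_3}$ to the \emph{set} $V(P)$; and the resulting case analysis ($|K_1|\le 2$, $|K_2|\le 2$, and $K_1\neq\emptyset$ forcing $K_2\subseteq\{x_{k-1}\}$) does give $|K|\le 3$. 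The degenerate cases ($u_{v_1}=u_{v_2}$, or $u_{v_3}\in V(P)$ so that $k=0$) are harmless, as you implicitly allow. Your closing remark that $3$ is attained by your construction is also right, and indeed the bound in the lemma itself is tight (e.g., $C$ a triangle with each vertex of $I$ pendant to a distinct vertex of the triangle).
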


\begin{lemma}\label{subcubic minor}
    If $H$ is a subcubic graph and $G$ is a graph that contains $H$ as a minor, then every minimal minor model of $H$ in $G$ is such that each branch set induces a subgraph with clique number at most $3$.
\end{lemma}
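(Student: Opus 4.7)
The plan is to apply Lemma~\ref{lemma:classes-STM} inside each branch set of a minimal minor model, using the cubic degree bound on $H$ to keep the separator small.

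Let $\{X_v\}_{v \in V(H)}$ be a minimal minor model of $H$ in $G$ and fix $v \in V(H)$. Since $H$ is subcubic, $|N_H(v)| \le 3$; for each $u \in N_H(v)$ I pick $b_u \in X_u$ with a neighbor in $X_v$, and set $I = \{b_u : u \in N_H(v)\}$, so $|I| \le 3$. I then apply Lemma~\ref{lemma:classes-STM} to the graph $G[X_v \cup I]$ with the set $I$: removing $I$ leaves $G[X_v]$ as a single connected component with every vertex of $I$ in its outer neighborhood, so the lemma returns a connected induced subgraph $H^*$ of $G[X_v]$ satisfying $I \subseteq N_G(V(H^*))$ and $\omega(H^*) \le 3$. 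A direct check shows that $\{V(H^*)\} \cup \{X_u\}_{u \ne v}$ is again a minor model of $H$ in $G$: the branch sets remain pairwise disjoint and induce connected subgraphs, and each edge $vu \in E(H)$ is witnessed by $b_u$, which has a neighbor in $V(H^*)$.

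It remains to deduce $V(H^*) = X_v$, from which $\omega(G[X_v]) \le 3$ follows. I argue that if $V(H^*) \subsetneq X_v$, then some $w \in X_v \setminus V(H^*)$ is not a cut vertex of $G[X_v]$; deleting such $w$ preserves connectedness and, thanks to $V(H^*)$, all adjacencies to the other branch sets, contradicting minimality. To find such $w$, split into two cases. If some non-cut vertex of $G[X_v]$ lies outside $V(H^*)$, take it. Otherwise every non-cut vertex of $G[X_v]$ lies in $V(H^*)$; pick any $w \in X_v \setminus V(H^*)$, which is then a cut vertex, let $D_1, \ldots, D_k$ be the components of $G[X_v \setminus \{w\}]$ with $V(H^*) \subseteq D_1$ (using connectedness of $V(H^*)$), and find a non-cut vertex $u \ne w$ of the connected graph $G[D_2 \cup \{w\}]$ lying in $D_2$. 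Since $w$ has a neighbor in every $D_i$, a direct verification shows $u$ is also a non-cut vertex of $G[X_v]$, so $u \in V(H^*) \subseteq D_1$, contradicting $u \in D_2$.

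The main obstacle is exactly this last step. The paper's definition of minimality forbids only single-vertex deletions from a branch set, whereas Lemma~\ref{lemma:classes-STM} can shrink $X_v$ all the way down to $V(H^*)$; bridging the gap requires the block-theoretic case analysis above, which effectively upgrades single-vertex-removal minimality to subset-minimality of each branch set.
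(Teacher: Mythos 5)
Your proposal is correct and follows the same route as the paper: pick one representative neighbor per branch set of an $H$-neighbor of $v$, apply Lemma~\ref{lemma:classes-STM} to get a connected subgraph $H^*$ of $G[X_v]$ with clique number at most $3$ still attached to all relevant branch sets, and invoke minimality to force $V(H^*)=X_v$. The paper asserts that last step in one line, whereas you supply the missing justification that single-vertex-deletion minimality suffices (your cut-vertex case analysis is valid; a spanning tree of $G[X_v]$ with $V(H^*)$ contracted would give the same conclusion slightly faster).
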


\begin{proof}
Let $\{ X_v \}_{v\in V(H)}$ be a minimal minor model of $H$ in $G$.  
Consider a vertex $v\in V(H)$.  Let $J=N_H(v)$. 
For each $u\in J$, pick a vertex $f(u) \in X_u$ such that $f(u)$ has some neighbor in $X_v$.  Let $I = \{f(u)\colon u\in J\}$.  
Since $H$ is subcubic, $|I|\leq 3$.
Let us apply \cref{lemma:classes-STM} to $G[I \cup X_v]$. 
This gives a connected induced subgraph $G'$ of $G[X_v]$ that has neighbors in all $X_u$'s such that $u\in J$.  
By the minimality of the minor model, $X_v=V(G')$, so $\omega(G[X_v]) = \omega(G') \leq 3$.  
\end{proof}

Given a graph class $\mathcal{G}$, we denote by $\mathcal{G}^{im}$ the class of induced minors of graphs in $\mathcal{G}$.
More precisely, \[\mathcal{G}^{im} = \{ H \colon \exists G\in \mathcal{G} \text{ such that } H \text{ is an induced minor of } G\}.\]

\begin{theorem}\label{thm:main}
If $\mathcal{G}$ is a $(\tw,\omega)$-bounded graph class, then $\mathcal{G}^{im}$ is also $(\tw,\omega)$-bounded.
Furthermore, if $\mathcal{G}$ is polynomially $(\tw,\omega)$-bounded, then so is $\mathcal{G}^{im}$.
\end{theorem}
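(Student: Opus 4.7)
Let $H \in \mathcal{G}^{im}$; since $\mathcal{G}^{im}$ is closed under induced subgraphs (an induced subgraph of an induced minor is still an induced minor, obtained by restricting the induced minor model to the chosen vertices), it suffices to bound $\tw(H)$ in terms of $k := \omega(H)$. Fix $G \in \mathcal{G}$ and an induced minor model $\{Y_h\}_{h \in V(H)}$ of $H$ in $G$, and let $f$ be a nondecreasing $(\tw,\omega)$-binding function for $\mathcal{G}$, chosen polynomial in the polynomial case via \cref{poly-nondec}. The strategy is to run the Grid Minor Theorem in reverse: if $\tw(H) \geq g(N)$, then $H$ contains a subdivision $W^*$ of the $N \times N$ wall as a subgraph, and we will construct an induced subgraph $G[S]$ of $G$ that has the $N \times N$ wall as a minor and satisfies $\omega(G[S]) \leq 3k$; $(\tw,\omega)$-boundedness of $\mathcal{G}$ will then force $N \leq \tw(G[S]) \leq f(3k)$ and hence $\tw(H) < g(f(3k)+1)$.

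The construction of $S$ exploits the subcubicity of $W^*$. For each edge $hh' \in E(W^*) \subseteq E(H)$, pick an edge $e_{hh'}$ of $G$ with endpoints $a_{h,h'} \in Y_h$ and $a_{h',h} \in Y_{h'}$; such an edge exists because $\{Y_h\}$ is an induced minor model. For each $h \in V(W^*)$, let $I_h = \{a_{h,h'} : h' \in N_{W^*}(h)\} \subseteq Y_h$ and note that $|I_h| \leq 3$. The main technical step is to apply \cref{lemma:classes-STM} to obtain a connected induced subgraph $V_h \subseteq Y_h$ with $I_h \subseteq V_h$ and $\omega(G[V_h]) \leq 3$. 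Since the lemma as stated needs the set $I$ to lie outside the chosen component, we employ a small pendant-augmentation trick: in an auxiliary graph, attach a private pendant $a^*$ to each $a \in I_h$ and invoke the lemma with $I^* := \{a^* : a \in I_h\}$; the neighborhood condition then forces every $a \in I_h$ into $V_h$.

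Set $S := \bigcup_{h \in V(W^*)} V_h$. The sets $\{V_h\}$ form a minor model of $W^*$ in $G[S]$ via the edges $e_{hh'}$, so $G[S]$ contains the $N \times N$ wall as a minor and thus $\tw(G[S]) \geq N$. For the clique bound, let $Q$ be a clique of $G[S]$; since the $V_h$ are pairwise disjoint, $Q$ partitions as $\bigsqcup_h (Q \cap V_h)$ with each $|Q \cap V_h| \leq 3$, and whenever $Q \cap V_h$ and $Q \cap V_{h'}$ are both non-empty for distinct $h, h'$, the edge between them witnesses an edge of $G$ from $Y_h$ to $Y_{h'}$, forcing $hh' \in E(H)$ by the induced minor property. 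Thus $\{h : Q \cap V_h \neq \emptyset\}$ is a clique of $H$ of size at most $k$, giving $|Q| \leq 3k$. Feeding $G[S]$ into the binding function closes the argument, and $f'(k) := g(f(3k)+1)$ is a $(\tw,\omega)$-binding function for $\mathcal{G}^{im}$, polynomial whenever $f$ is (since $g$ can be chosen polynomial). The main obstacle is the simultaneous control of connectivity and clique number in the $V_h$: we must trim each $Y_h$ enough to kill all large cliques while preserving the connection points in $I_h$ that witness the wall structure, which is exactly what \cref{lemma:classes-STM} combined with the pendant trick delivers.
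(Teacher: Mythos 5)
Your proposal is correct and follows essentially the same route as the paper: apply the Grid Minor Theorem to a hypothetical high-treewidth $H\in\mathcal{G}^{im}$, use the subcubicity of the wall subdivision together with \cref{lemma:classes-STM} to shrink the relevant branch sets to connected sets of clique number at most $3$, bound the clique number of their union by $3\omega(H)$ via the induced-minor property, and derive a contradiction with the binding function of $\mathcal{G}$. The only difference is in one technical step: where the paper passes to a minimal minor model of the wall subdivision and invokes \cref{subcubic minor}, you apply \cref{lemma:classes-STM} directly inside each branch set via a pendant-augmentation trick to retain the designated attachment vertices; both devices deliver the same conclusion.
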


\begin{proof}
Let $f$ be a nondecreasing $(\tw,\omega)$-binding function  for $\mathcal{G}$.
Let $g$ be the function from~\cref{thm:grid-minor thm} and
let $h$ be defined by $h(x) = g(f(3x) + 1)$ for all $x\in \N$. 
We show that $h$ is a $(\tw,\omega)$-binding function for $\mathcal{G}^{im}$.
Since the class $\mathcal{G}^{im}$ is hereditary, it suffices to show that $\tw(H)\le h(\omega(H))$ for all graphs $H\in \mathcal{G}^{im}$.
Suppose for a contradiction that there exists a graph $H\in \mathcal{G}^{im}$ such that $\tw(H)> h(\omega(H))$.
Let $k=f(3\omega(H))+1$.
Then \[\tw(H)> h(\omega(H)) = g (f(3\omega(H))+1)=g(k),\] 
so by~\Cref{thm:grid-minor thm}, $H$ has a subgraph $W$ that is a subdivision of a $k\times k$ wall.

Since $H\in \mathcal{G}^{im}$, there exists a graph $G\in \mathcal{G}$ such that $H$ is an induced minor of $G$.
Let $\{ X_v \}_{v\in V(H)}$ be an induced minor model of $H$ in $G$.
Note that, since $W$ is a subgraph of $H$, the branch sets $X_v$, $v\in V(W)$, form a minor model of $W$ in $G$.
Consequently, there is a minimal minor model $\{ X'_v \}_{v\in V(W)}$ of $W$ in $G$ such that $X'_v\subseteq X_v$ for each vertex $v\in V(W)$.
By \cref{subcubic minor}, $\omega(G[X'_v]) \le 3$ for every $v\in V(W)$.

Let $G'$ be the subgraph of $G$ induced by $\bigcup_{v\in V(W)}X'_v$.
We claim that $\omega (G')\leq 3\omega(H)$.
Let $K$ be a largest clique in $G'$. 
Since for each $v\in V(W)$, every vertex in $K\cap X_v$ belongs to $X_v'$, we have $|K\cap X_v| = |K\cap X_v'|\le \omega(G[X'_v])\le 3$.
Since $K\cap X_v = \emptyset$ for each $v\in V(H)\setminus V(W)$, we conclude that $K$ contains at most $3$ vertices from each branch set of $H$.
Let $C$ be the set of all vertices $v\in V(H)$ such that $X_v$ contains a vertex from $K$.
Then, for every two distinct vertices $u$ and $v$ in $C$, every vertex in $K\cap X_u$ is adjacent to every vertex in $K\cap X_v$, and, hence, $u$ and $v$ are adjacent in $H$ since $\{ X_v \}_{v\in V(H)}$ is an induced minor model of $H$.
It follows that $C$ is a clique in $H$.
This shows that $\omega(G')=|K| = \sum_{v\in C}|K\cap X_v|\leq 3|C|\le 3\omega(H)$, as claimed.

Since $G'$ is an induced subgraph of $G\in \mathcal{G}$ and $f$ is nondecreasing, we have that $\tw (G')\le f (\omega (G'))\le f(3\omega(H))$.
However, we also know that $G'$ contains $W$ as a minor, and thus $\tw(G')\geq \tw(W)\ge k=f (3\omega(H))+1$, a contradiction.
This shows that $h$ is a $(\tw,\omega)$-binding function for $\mathcal{G}^{im}$.

Finally, since the function $g$ from~\cref{thm:grid-minor thm} can be chosen to be a polynomial (see~\cite{MR3593966,MR4155282}) and polynomials compose, $h$ is a polynomial whenever $f$ is (here we also use~\cref{poly-nondec}).
\end{proof}

\section{Concluding remarks}
\label{sec:open}

Following the proof of Theorem~\ref{thm:main}, it is easy to see that if $G$ contains $K_{\ell, \ell}$ as an induced minor with $\ell$ large enough, then it contains a $K_7$-free induced subgraph of high treewidth. 
In fact, Chudnovsky, Hajebi, and Spirkl recently showed in \cite{CHS24} that if a graph $G$ contains $K_{\ell, \ell}$ as an induced minor with $\ell$ large enough, then either $G$ has an induced minor isomorphic to a large wall, or $G$ contains a large complete bipartite induced minor model such that on one side of the bipartition, each branch set is a singleton, and on the other side, each branch set induces a path. 
An immediate consequence of this result is the following.

\begin{theorem}
For every positive integer $k$ there exists a positive integer $\ell$ if a graph $G$ contains $K_{\ell, \ell}$ as an induced minor, then $G$ contains a $K_4$-free induced subgraph with treewidth at least $k$.
\end{theorem}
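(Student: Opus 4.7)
The plan is to apply the Chudnovsky--Hajebi--Spirkl dichotomy with $\ell$ chosen sufficiently large in terms of $k$, and to verify that in each of the two outcomes, $G$ contains a $K_4$-free induced subgraph of treewidth at least $k$.

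In the second outcome, $G$ contains a complete bipartite induced minor model $\{X_v\}$ of some $K_{m,m}$ with $m$ large, with singletons on one side and paths $P_1,\ldots,P_m$ on the other. Setting $G' := G[\bigcup_v X_v]$, contracting each $P_j$ to a single vertex exhibits $K_{m,m}$ as a minor of $G'$, so $\tw(G') \ge \tw(K_{m,m}) \ge m \ge k$. The graph $G'$ is also $K_4$-free: the singletons form an independent set and the paths $P_j$ are pairwise nonadjacent, both by the induced-minor property, so any clique in $G'$ contains at most one singleton together with at most two consecutive vertices of a single path, for at most $3$ vertices in total.

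The first outcome, where $G$ has a large wall $W_m$ as an induced minor, is the harder case. I would fix a minimal induced minor model $\{X_v\}_{v\in V(W_m)}$ in $G$. Since walls are subcubic, \cref{subcubic minor} gives $\omega(G[X_v]) \le 3$ for each $v$; since walls are moreover bipartite, any clique in $G[\bigcup_v X_v]$ uses vertices from at most two adjacent branch sets, so $\omega(G[\bigcup_v X_v]) \le 6$. This already produces a $K_7$-free induced subgraph of treewidth at least $m$. To sharpen from $K_7$-free to $K_4$-free, the plan is to pass to a sufficiently subdivided subwall $W_{m'}$ of $W_m$ (still with $m' \ge k$) and, for each vertex $v$ of $W_{m'}$, select inside $G[X_v]$ a Steiner tree connecting one chosen attachment vertex per neighbor of $v$ in $W_{m'}$, together with a single cross-edge per edge of $W_{m'}$. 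With sufficient subdivision, the attachment vertices can be taken far enough apart inside each branch set that no chord edges appear, so the resulting subgraph of $G$ is an induced subdivision of $W_{m'}$, hence triangle-free (in particular $K_4$-free) and of treewidth at least $m'$.

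The hard part is exactly this lifting step in the first case: turning the wall structure from an induced minor into an induced subdivision of a wall inside $G$, which demands a careful choice of attachment vertices and of subpaths inside the tree-like branch sets so as to avoid chord edges that would otherwise create triangles or larger cliques. The second outcome, by contrast, hands us the $K_4$-free induced subgraph essentially for free from the singleton/path structure of the model.
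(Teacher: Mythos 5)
Your handling of the second outcome is correct and is exactly what is intended: the union of the branch sets contains $K_{m,m}$ as a minor (contract the paths), hence has treewidth at least $m$, and its cliques consist of at most one singleton plus at most two consecutive vertices of a single path, so it is $K_4$-free.

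The first outcome is where your argument has a genuine gap, and it is not merely a missing detail: the lifting you propose --- from a large wall induced minor to a large \emph{induced subdivision} of a wall --- is false in general, no matter how carefully the attachment vertices and Steiner trees are chosen. A counterexample is the very construction used in the paper's concluding observation: $L(S(W))$, the line graph of the graph obtained from a wall $W$ by subdividing every edge once, contains $W$ as an induced minor, yet it is claw-free, so every triangle-free induced subgraph of it has maximum degree at most $2$ and therefore treewidth at most $2$; in particular it contains no induced subdivision of any nontrivial wall. So no amount of subdividing the subwall or spreading out attachment points inside the branch sets can produce the chordless subdivision you are after. The intended argument instead uses the full strength of the Chudnovsky--Hajebi--Spirkl theorem, whose wall outcome is an \emph{induced subgraph} of $G$ isomorphic to a subdivision of a large wall \emph{or to the line graph of a subdivision of a large wall}; the sentence in the paper compresses both into ``an induced minor isomorphic to a large wall,'' which is strictly weaker and insufficient on its own. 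Both of these induced subgraphs are $K_4$-free --- a subdivision of a wall is triangle-free, and a clique in the line graph of a triangle-free subcubic graph has size at most $3$ --- and both have treewidth growing linearly with the side length of the wall, which settles this case. Your fallback observation that \cref{subcubic minor} yields $\omega\le 6$ on the union of the branch sets only recovers the $K_7$-free statement already made in the paper just before the theorem; it does not get you to $K_4$.
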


Note that this is the best possible.

\begin{observation}
For every positive integer $\ell$ there exists a graph $G_\ell$ that contains $K_{\ell, \ell}$ as an induced minor and every $K_3$-free induced subgraph of $G_\ell$ has treewidth at most~$2$.
\end{observation}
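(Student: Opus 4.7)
The plan is to take $G_\ell=L(K_{\ell^2})$, the line graph of the complete graph on $\ell^2$ vertices, and verify both required properties. The key point is that every line graph is claw-free, and any $K_3$-free induced subgraph of a claw-free graph has maximum degree at most~$2$: if some vertex $v$ had three neighbors $u_1,u_2,u_3$ in such an induced subgraph $H$, then $\{u_1,u_2,u_3\}$ is independent in $H$ (hence in the ambient graph, since $H$ is induced), so $\{v,u_1,u_2,u_3\}$ would induce a claw, a contradiction. Therefore every $K_3$-free induced subgraph of $G_\ell$ is a disjoint union of paths and cycles and has treewidth at most~$2$.

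For the induced minor of $K_{\ell,\ell}$, I would identify $V(K_{\ell^2})$ with $[\ell]\times[\ell]$ and consider the ``row'' sets $A_i=\{i\}\times[\ell]$ and ``column'' sets $B_j=[\ell]\times\{j\}$. As branch sets, take $X_{a_i}=\binom{A_i}{2}$ and $X_{b_j}=\binom{B_j}{2}$, viewed as vertex subsets of $L(K_{\ell^2})$. These $2\ell$ sets are pairwise disjoint because no $2$-element subset of $[\ell]\times[\ell]$ can simultaneously lie in two distinct rows or columns, nor in both a row and a column. For $\ell\ge 2$, two $2$-subsets of $A_i$ (resp.\ of $B_j$) are adjacent in $L(K_{\ell^2})$ exactly when they share an element, and the resulting graph on $\binom{\ell}{2}$ vertices is connected (the case $\ell=1$ is trivial and handled by $G_1=K_2$). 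Two $2$-subsets lying in distinct rows (resp.\ distinct columns) are disjoint and therefore nonadjacent in the line graph, so there are no edges within a side. Finally, the $2$-subsets $\{(i,j),(i,j')\}\in X_{a_i}$ and $\{(i,j),(i',j)\}\in X_{b_j}$ share the element $(i,j)$, providing an edge between $X_{a_i}$ and $X_{b_j}$ for all $i,j$. This yields an induced minor model of $K_{\ell,\ell}$ in $G_\ell$.

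The main conceptual step is identifying line graphs as the right family: claw-freeness forces $K_3$-free induced subgraphs to have maximum degree at most~$2$, while the line graph of a large complete graph is still rich enough to accommodate $K_{\ell,\ell}$ as an induced minor via row/column branch sets. After that, everything is routine verification of the induced minor model conditions.
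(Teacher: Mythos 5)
Your proof is correct. The decisive idea---claw-freeness of line graphs forces every $K_3$-free induced subgraph to have maximum degree at most $2$, hence to be a disjoint union of paths and cycles of treewidth at most $2$---is exactly the one the paper uses, and your derivation of it is fine. Where you genuinely diverge is in the choice of the witness graph and the verification of the induced-minor property: the paper takes $G_\ell = L(S(K_{\ell,\ell}))$, the line graph of the graph obtained from $K_{\ell,\ell}$ by subdividing every edge once, and invokes the general fact that every graph $G$ is an induced minor of $L(S(G))$; you instead take $L(K_{\ell^2})$ and build an explicit induced minor model of $K_{\ell,\ell}$ from the row and column edge sets $\binom{A_i}{2}$ and $\binom{B_j}{2}$. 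Your verification (disjointness, connectivity of $L(K_\ell)$ for $\ell\ge 2$, no edges between distinct rows or distinct columns since such edge sets are vertex-disjoint, and a shared endpoint $(i,j)$ giving the cross edges) is complete, and you correctly flag that $\ell=1$ needs the separate fallback $G_1=K_2$ because $\binom{A_i}{2}$ is then empty. The trade-off is that your construction is self-contained and avoids the unproved (though standard) $L(S(G))$ fact, but is tailored to complete bipartite targets, whereas the paper's construction is shorter and works verbatim for any desired induced minor $H$.
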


\begin{proof}
Let us first note that every graph $G$ is an induced minor of the graph $L(S(G))$, the line graph of the graph obtained from $G$ by subdividing every edge exactly once.

Now, for every positive integer $\ell$, let $G_\ell$ be the graph $L(S(K_{\ell,\ell}))$.
Then, $G_\ell$ contains $K_{\ell, \ell}$ as an induced minor.
Furthermore, since $G_\ell$ is a line graph, it is claw-free. 
Hence, every triangle-free induced subgraph of $G_\ell$ has maximum degree at most~$2$, and is therefore a disjoint union of paths and cycles, so has treewidth at most~$2$.
\end{proof}

\section*{Acknowledgements}

The authors are grateful to \'Edouard Bonnet for helpful discussions.
Claire Hilaire is supported in part by the Slovenian Research and Innovation Agency (research project J1-4008).
Martin Milani\v{c} is supported in part by the Slovenian Research and Innovation Agency (I0-0035, research program P1-0285 and research projects J1-3001, J1-3002, J1-3003, J1-4008, J1-4084, and N1-0370) and by the research program CogniCom (0013103) at the University of Primorska.
Nicolas Trotignon is partially supported by the French National Research Agency under research grant ANR DIGRAPHS ANR-19-CE48-0013-01,  and the LABEX MILYON  (ANR-10-LABX-0070) of Université de Lyon, within the program Investissements d’Avenir (ANR-11-IDEX-0007) operated by the French National Research Agency (ANR).

\end{document}